\setlist[enumerate]{leftmargin=.5in}
\setlist[itemize]{leftmargin=.5in}
\crefname{hypothesis}{Hypothesis}{Hypotheses}
\title{Well-posedness for Photoacoustic Tomography with Fabry--Perot Sensors\thanks{Submitted for review. \funding{This work was partially funded by NSF grant DMS-1712725}}}
\author{Sebastian Acosta\thanks{Predictive Analytics Lab, Baylor College of Medicine and Texas Children's Hospital
  (\email{sebastian.acosta@bcm.edu}, \url{https://sites.google.com/site/acostasebastian01/}).}}
\newcommand{\Psen}{p_{\rm s}}
\newcommand{\Psub}{p_{\rm b}}
\newcommand{\Usen}{\textbf{u}_{\rm s}}
\newcommand{\rhosen}{\rho_{\rm s}}
\newcommand{\rhosub}{\rho_{\rm b}}
\newcommand{\Csen}{c_{\rm s}}
\newcommand{\Csub}{c_{\rm b}}
\newcommand{\Lambdasub}{\Lambda_{\rm b}}
\newcommand{\Lambdasubh}{\Lambda_{{\rm b},h}}
\newcommand{\Meas}{\mathfrak{m}}
\newcommand{\HH}{\mathcal{H}}
\newcommand{\FF}{\mathcal{F}}
\newcommand{\KK}{\mathcal{K}}
\crefname{assumption}{Assumption}{Assumptions}
\begin{document}

\maketitle

% REQUIRED
\begin{abstract}
In the mathematical analysis of photoacoustic imaging, it is usually assumed that the acoustic pressure (Dirichlet data) is measured on a detection surface. However, actual ultrasound detectors gather data of a different type. In this paper, we propose a more realistic mathematical model of ultrasound measurements acquired by the Fabry--Perot sensor. This modeling incorporates directional response of such sensors. We study the solvability of the resulting photoacoustic tomography problem, concluding that the problem is well-posed under certain assumptions. Numerical reconstructions are implemented using the Landweber iterations, after discretization of the governing equations using the finite element method. 
\end{abstract}

% REQUIRED
\begin{keywords}
Thermoacoustic, imaging, inverse problems, Fabry--Perot sensor, ultrasound transducers
\end{keywords}

% REQUIRED
\begin{AMS}
  35R30, 35L05, 35R01, 92C55
\end{AMS}

%35L05 Partial differential equations: wave equation
%35R30 Partial differential equations: Inverse problems
%35R01 Partial differential equations: on manifolds
%92C55 Biology and other natural sciences: Biomedical imaging and signal processing

%%%%%%%%%%%%%%%%%%%%%%%%%%%%%%%%%%%%%%%%%%%%%%%%%%%%%%%
%%%%%%%%%%%%%%%%   NEW SECTION   %%%%%%%%%%%%%%%%%%%%%%
%%%%%%%%%%%%%%%%%%%%%%%%%%%%%%%%%%%%%%%%%%%%%%%%%%%%%%%
\section{Introduction}

Photoacoustic tomography (PAT) is a hybrid imaging technique based on the photoacoustic effect, which is the transformation of absorbed electromagnetic energy into pressure waves. This technique takes advantage of the fact that absorption exhibits high-contrast in soft biological tissues and that acoustic waves can be measured with broadband transducers leading to imaging with high-resolution. Therefore, high-contrast and high-resolution can be achieved simultaneously 
\cite{Beard2011,Cox2009a,Cox2012,Pramanik2009,  Wang-Anastasio-2011,Wang-2009, Wang2012,Wang-Wu-2007, Wang2003}.

For qualitative photoacoustic tomography, the goal is to form an image of the initial state of the pressure field using boundary measurements of the transient pressure waves.
Most of the reconstruction methods assume that the actual pressure field (Dirichlet data) can be measured at the boundary \cite{Acosta-Montalto-2015,Acosta-Montalto-2016,Acosta-Palacios-2018,Agranovsky2009,Arridge2016,Cox2009a,Frederick2018,Haltmeier2017b,
Haltmeier2018c,Haltmeier2017,Hristova2008,Kowar2011a,Kuchment2008,Nguyen2009,Nguyen2018,Palacios2016b,Qian2011,Ren2018,
Scherzer2017a,Stefanov2009}. In reality, ultrasound sensors are not able to measure the pressure field directly. Instead, they measure certain combinations of the field and its derivatives. This challenge has been noted in \cite{Xu2004} and investigated by Finch \cite{Finch2005p} and by Zangerl, Moon and Haltmeier \cite{Zangerl2018}.

Fabry--Perot transducers offer an alternative to piezoelectric sensors for ultrasound-based imaging applications \cite{Arridge2016c,Beard1999b,Cox2007c,Guggenheim2017,Sheaff2014,Yoo2015,Zhang2008}. The design consists of a sensing film (10-50 $\mu$m thick) sandwiched between extremely thin optically reflective coatings ($\approx$ 50 nm thick) lying on an optically transparent backing substrate ($\approx$ 2 cm thick). An illustration is shown in \cref{fig:Domain}. An interrogating laser beam is employed to generate reflections from both optically reflective coatings. When an incident pressure wave modulates the thickness of the sensing material, the change in the interference pattern from the reflected laser beam is used to estimate the distance between the reflective coatings. The deformation of the sensing material can then be related to pressure measurements. Cox and Beard provide a description of the Fabry--Perot design, and an excellent study of its frequency and directional responses \cite{Cox2007c}. 

\begin{figure}[htbp]
  \centering
  \includegraphics[width=0.45\textwidth]{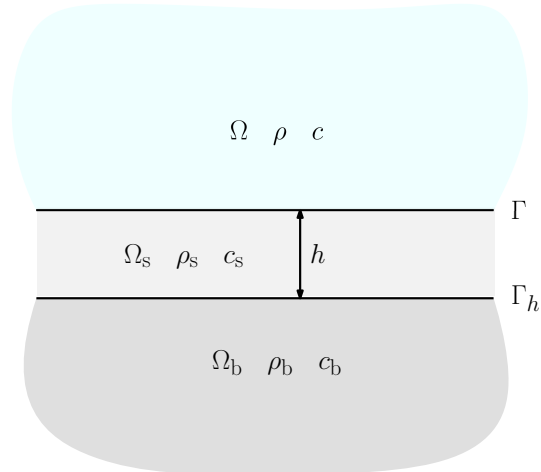}
  \caption{Diagram of domains and boundaries. Acoustic domain $\Omega$ with density $\rho$ and wave speed $c$. Sensing film $\Omega_{\rm s}$ of thickness $h >0$, density $\rhosen$ and wave speed $\Csen$. Backing substrate $\Omega_{\rm b}$ with density $\rhosub$ and wave speed $\Csub$. The interface between the acoustic domain and the sensing film is denoted $\Gamma$. The interface between the sensing film and the backing substrate is denoted $\Gamma_{h}$.}
  \label{fig:Domain}
\end{figure} 

In this paper, we model and investigate the mathematical solvability of the PAT problem for measurements acquired by sensors based on the Fabry--Perot design. We model idealized point-like ultrasound transducers and the physical variable being measured by these sensors. Our goal is to determine whether such measurements lead to the mathematical solvability of the PAT problem. We shall not account for resolution limitations of the Fabry--Perot design due to finite-size sensing elements. We refer the reader to \cite{Cox2007c,Wang2011a,Xia2014,Xu2003c} for investigations concerning this issue. Our modeling is further simplified by ignoring shear waves that can travel in the sensing material and its backing substrate. In other words, we develop an analysis based entirely on the scalar wave equation. 

The acoustic domain $\Omega$ contains soft tissue with density $\rho$ and wave speed $c$. The sensing film $\Omega_{\rm s}$ of thickness $h >0$ has density $\rhosen$ and wave speed $\Csen$. The backing substrate $\Omega_{\rm b}$ has density $\rhosub$ and wave speed $\Csub$. We assume that $\rhosen$, $\rhosub$, $\Csen$ and $\Csub$ are positive constants. However, $\rho$ and $c$ may vary within $\Omega$. The interface between the acoustic domain and the sensing film is denoted $\Gamma$. The interface between the sensing film and the backing substrate is denoted $\Gamma_{h}$. Typically, the sensing film and the backing substrate are acoustically more rigid than the biological soft tissue of interest. Hence, the presence of the sensors induces partial reflections of the waves. Other researchers have investigated PAT with reflecting boundaries assuming that the actual pressure can be measured \cite{Acosta-Montalto-2015,Cox2007,Ellwood2014,Huang2013a,Kunyansky2013,Nguyen2016}. Here we seek to incorporate in our model the influence that the sensor exerts on the pressure waves, as well as the nature of the acoustic measurements for the Fabry--Perot design. The interplay between the sensors and the pressure field is modeled by the following transmission conditions at the interface $\Gamma$,
\begin{equation} \label{eq:Transmission1}
\begin{aligned} 
p = \Psen \quad \text{and} \quad \frac{1}{\rho} \frac{\partial p}{\partial n} = \frac{1}{\rhosen} \frac{\partial \Psen}{\partial n}  \qquad \text{on $\Gamma$,}
\end{aligned}
\end{equation}
where $p$ and $\Psen$ are the pressure in the acoustic medium and sensing material, respectively. The first condition in (\ref{eq:Transmission1}), known as dynamic transmission, ensures the continuity of the pressure field. The second condition in (\ref{eq:Transmission1}), known as kinematic transmission, ensures the continuity of particle motion in the normal direction. Similar transmission conditions hold at the interface $\Gamma_{h}$,
\begin{equation} \label{eq:Transmission2}
\begin{aligned} 
\Psen = \Psub \quad \text{and} \quad \frac{1}{\rhosen} \frac{\partial \Psen}{\partial n} = \frac{1}{\rhosub} \frac{\partial \Psub}{\partial n}  \qquad \text{on $\Gamma_{h}$,}
\end{aligned}
\end{equation}
where $\Psub$ is the pressure in the backing substrate.

In order to simply the analysis, in \Cref{sec:eff_bdy_condition} we derive an effective boundary condition (valid for small $h>0$) to replace the transmission conditions (\ref{eq:Transmission1})-(\ref{eq:Transmission2}). In \Cref{sec:modeling_measurements} we mathematically model the measurements acquired by ultrasound sensors based on the Fabry--Perot design. For the effective boundary condition and modeled boundary measurements, in \Cref{sec:main} we state and prove the solvability of the photoacoustic tomography problem. A reconstruction algorithm is proposed in \Cref{sec:alg} where some numerical experiments are presented as well. The conclusions follow in \Cref{sec:conclusions}.

%%%%%%%%%%%%%%%%%%%%%%%%%%%%%%%%%%%%%%%%%%%%%%%%%%%%%%%
%%%%%%%%%%%%%%%%   NEW SECTION   %%%%%%%%%%%%%%%%%%%%%%
%%%%%%%%%%%%%%%%%%%%%%%%%%%%%%%%%%%%%%%%%%%%%%%%%%%%%%%
\section{Effective boundary condition}
\label{sec:eff_bdy_condition}

For analytical and numerical purposes, it is convenient to replace the transmission conditions (\ref{eq:Transmission1})-(\ref{eq:Transmission2}) for an asymptotically equivalent boundary condition for the acoustic pressure field at the boundary $\Gamma$. This condition is meant to account for the transmission into the sensing film $\Omega_{\rm s}$ and into the backing substrate $\Omega_{\rm b}$ without having to explicitly solve for the wave fields in those domains. See \cite{Antoine2005g,Antoine2005h,Bendali1996,Bonnet2016,Johansson2005,Peron2014} where similar problems are treated. This effective boundary condition also simplifies the model for the measurements as shown in \Cref{sec:modeling_measurements}.

We proceed by making some geometric assumptions about the domain $\Omega_{\rm s}$ occupied by the sensing film. We use the concept of \textit{parallel surfaces} to define the shape of this extremely thin layer of material. These surfaces are parametrized by $0 < r < h$ and defined by $\Gamma_{r} = \left \{ \textbf{y} = \textbf{x} + r \textbf{n}(\textbf{x}) : \textbf{x} \in \Gamma \right \}$. For smooth $\Gamma$ and sufficiently small $h$, each surface $\Gamma_{r}$ is well-defined and smooth. Moreover, the normal vector $\textbf{n}(\textbf{x} + r \textbf{n}(\textbf{x}))$ of the parallel surface $\Gamma_{r}$ coincides with the normal vector $\textbf{n}(\textbf{x})$ of $\Gamma$ for each $\textbf{x} \in \Gamma$. We let $\Omega_{\rm s}$ be the union of this family of parallel surfaces, where $h$ being sufficiently small ensures that each point $\textbf{y} \in \Omega_{\rm s}$ can be uniquely represented in the form $\textbf{y} = \textbf{x} + r \textbf{n}(\textbf{x})$ for $\textbf{x} \in \Gamma$ and $0 < r < h$. See details in \cite[\S 6.2]{Kress-Book-1999} and \cite[Probl. 11 \S 3.5]{DoCarmo1976}. 

The pressure field $\Psen$ in the sensing material satisfies the wave equation,
\begin{equation} \label{eq:wave_eq_sensing}
\Csen^{-2} \frac{\partial^2 \Psen}{\partial t^2} = \frac{\partial^2 \Psen}{\partial n^2} + 2 \HH_{r} \frac{\partial \Psen}{\partial n} + \Delta_{\Gamma_{r}} \Psen, \qquad \text{in $(0,T) \times \Omega_{\rm s}.$}
\end{equation}
For convenience, we have expressed the Laplacian in $\Omega_{\rm s}$ using the normal derivative $\partial/\partial n$ (which makes sense at any point in $\Omega_{s}$ given its definition in terms of parallel surfaces), the mean curvature $\HH_{r}$ of $\Gamma_{r}$ and the Laplace--Beltrami operator $\Delta_{\Gamma_{r}}$ associated with $\Gamma_{r}$. See details in \cite{Antoine1999}. As in \cite{Cox2007c}, we assume that the pressure field $\Psub$ in the backing substrate is outgoing. Therefore, the pressure field $\Psub$ satisfied the following radiation condition
\begin{equation} \label{eq:radiation}
\frac{\partial \Psub}{\partial n} = \Lambdasubh \Psub \qquad \text{on $\Gamma_{h}$}
\end{equation}
where $\Lambdasubh$ is a nonreflecting boundary operator. The subject of nonreflecting or absorbing boundary conditions is beyond the scope of this paper. We refer to \cite{Antoine1999,Barucq2012,Antoine2008,Chniti2016,Acosta2017c} for some relevant articles on that topic. We consider
\begin{equation} \label{eq:DtN}
\Lambdasubh = - \Csub^{-1} \partial_{t} - \HH_{h}
\end{equation}
which is derived in \cite{Antoine1999} as a first order nonreflecting condition that takes into account the mean curvature $\HH_{h}$ of the boundary $\Gamma_{h}$. As shown in \cite[\S 6.2]{Kress-Book-1999} or \cite[Probl. 11 \S 3.5]{DoCarmo1976}, the mean curvatures $\HH$ and $\HH_{h}$ of the surfaces $\Gamma$ and $\Gamma_{h}$, respectively, are related by
\begin{equation*} \label{eq:Curvatures}
\HH_{h} = \frac{ \HH + h\KK }{ 1 + 2 h \HH + h^2 \KK} = \HH + h \left( \KK - 2 \HH^2 \right) + \mathcal{O}(h^2)
\end{equation*}
where $\KK$ is the Gaussian curvature of $\Gamma$. Notice that we are using the mean curvature sign convention from \cite{Antoine1999}, not from \cite{Kress-Book-1999} or \cite{DoCarmo1976}. Therefore, we have that at the surface $\Gamma$, the associated nonreflecting operator $\Lambdasub$ given by 
\begin{equation} \label{eq:DtN2}
\Lambdasub = - \Csub^{-1} \partial_{t} - \HH
\end{equation}
satisfies $\Lambdasubh = \Lambdasub + h \left( \KK - 2 \HH^2 \right) + \mathcal{O}(h^2)$ where $\Lambdasubh$ is defined in (\ref{eq:DtN}).

We proceed with a Taylor expansion for the normal derivative of the pressure field,
\begin{equation*} \label{eq:second_order_approx_Neumann_data}
\begin{aligned} 
\frac{1}{\rho} \left. \frac{\partial p}{\partial n} \right|_{\Gamma} &=  \frac{1}{\rhosen} \left. \frac{\partial \Psen}{\partial n} \right|_{\Gamma} = \frac{1}{\rhosen} \left[ \left. \frac{\partial \Psen}{\partial n} \right|_{\Gamma_{h}} - h \left. \frac{\partial^2 \Psen}{\partial n^2} \right|_{\Gamma}  \right] +  \mathcal{O}(h^2)  \\ 
&= \frac{1}{\rhosen} \left[ \frac{\rhosen}{\rhosub}  \Lambdasubh \left. \left( \Psen + h  \frac{\partial \Psen}{\partial n} \right) \right|_{\Gamma}  - h \left. \frac{\partial^2 \Psen}{\partial n^2} \right|_{\Gamma}   \right] + \mathcal{O}(h^2) \\
&= \frac{1}{\rhosen} \left[ \frac{\rhosen}{\rhosub}  \Lambdasubh \left. \left( p + h \frac{\rhosen}{\rho} \frac{\partial p}{\partial n} \right) \right|_{\Gamma}  - h \left. \left( \Csen^{-2} \partial^{2}_{t} p - 2 \HH \frac{\rhosen}{\rho} \frac{\partial p}{\partial n} - \Delta_{\Gamma} p \right)  \right|_{\Gamma}   \right] + \mathcal{O}(h^2) 
\end{aligned}
\end{equation*}
where we have employed the transmission conditions (\ref{eq:Transmission1})-(\ref{eq:Transmission2}), the wave equation (\ref{eq:wave_eq_sensing}) and the radiation condition (\ref{eq:radiation}). Using (\ref{eq:DtN2}), re-grouping terms and neglecting $\mathcal{O}(h^2)$ terms, we obtain a first order boundary condition
\begin{equation} \label{eq:Eff_bdy_condition}
\frac{1}{\rho} \frac{\partial p}{\partial n} - \frac{1}{\rhosub} \Lambdasub  p = h \left[ \frac{\rhosen}{\rho \rhosub} \Lambdasub \frac{\partial p}{\partial n} + \frac{1}{\rhosub} \left( \KK - 2 \HH^2 \right) p
   -  \frac{1}{\rhosen} \left( \Csen^{-2} \partial_{t}^{2} p - 2 \HH \frac{\rhosen}{\rho} \frac{\partial p}{\partial n} - \Delta_{\Gamma} p \right) \right]
\end{equation}
for the acoustic wave field $p$ on the surface $\Gamma$. This conditions implies that most of the influence (zeroth order terms) that the sensor exerts on the pressure field at the boundary is provided by the thick backing substrate which reflects and refracts the wave field according to the mismatch in densities $\rho$ and $\rhosub$, and in wave speeds $c$ and $\Csub$. The presence of the thin sensing film is accounted for by the first order terms in (\ref{eq:Eff_bdy_condition}). If these latter terms are neglected, the pressure field satisfies the following zeroth order effective boundary condition
\begin{equation} \label{eq:Eff_bdy_condition0}
\frac{1}{\rho}\frac{\partial p}{\partial n} = \frac{1}{\rhosub} \Lambdasub p  \qquad \text{on $\Gamma$}.
\end{equation}

However, the terms of order $\mathcal{O}(h)$ in (\ref{eq:Eff_bdy_condition}) become important in \Cref{sec:modeling_measurements} where we model the ultrasound measurements which are of order $\mathcal{O}(h)$.

%%%%%%%%%%%%%%%%%%%%%%%%%%%%%%%%%%%%%%%%%%%%%%%%%%%%%%%
%%%%%%%%%%%%%%%%   NEW SECTION   %%%%%%%%%%%%%%%%%%%%%%
%%%%%%%%%%%%%%%%%%%%%%%%%%%%%%%%%%%%%%%%%%%%%%%%%%%%%%%
\section{Modeling ultrasound measurements}
\label{sec:modeling_measurements}

For an ultrasound sensor based on the Fabry--Perot design, the quantity being measured is proportional to the difference in the normal projection of the particle displacement on both sides of the sensing film \cite{Cox2007c}. Hence, up to a constant of proportionality, the measurements $\Meas$ acquired by the ultrasound transducer satisfy
\begin{equation}\label{eq:main_measurement}
\partial_{t}^{2} \Meas \sim \left( \left. \partial_{t}^{2} \Usen \right|_{\Gamma}  - \left. \partial_{t}^{2} \Usen \right|_{\Gamma_{h}}  \right)  \cdot \textbf{n} = \frac{1}{\rhosen} \left( \left. \frac{\partial \Psen}{\partial n} \right|_{\Gamma_{h}} - \left. \frac{\partial \Psen}{\partial n} \right|_{\Gamma} \right),
\end{equation}
where the symbol $\sim$ means equality up to a multiplicative constant, and the pressure--displacement formulation is valid in the absence of shear stress. We seek to express the measurement in terms of the pressure in the acoustic medium only. Using the transmission conditions (\ref{eq:Transmission1})-(\ref{eq:Transmission2}) at both sides of the sensing film, and (\ref{eq:radiation})-(\ref{eq:DtN2}), we obtain
\begin{equation} \label{eq:transmission_aux}
\begin{aligned} 
 \frac{\partial \Psen}{\partial n} &= \frac{\rhosen}{\rho} \frac{\partial p}{\partial n} \quad \text{on $\Gamma$,}  \\
\frac{\partial \Psen}{\partial n} &= \frac{\rhosen}{\rhosub} \frac{\partial \Psub}{\partial n} = \frac{\rhosen}{\rhosub} \Lambdasubh \Psub = \frac{\rhosen}{\rhosub} \Lambdasubh \Psen = \frac{\rhosen}{\rhosub} \left[\Lambdasub + h\left( \KK - 2 \HH^2 \right)\right] \Psen \quad \text{on $\Gamma_{h}$.}
\end{aligned}
\end{equation}

We make the following Taylor approximation for the pressure field $\Psen$ within the sensing film and combine it with (\ref{eq:Transmission1})-(\ref{eq:Transmission2}) and (\ref{eq:transmission_aux}) to obtain,
\begin{equation} \label{eq:second_order_approx}
\begin{aligned} 
\left. \Psen \right|_{\Gamma_{h}} = \left. \left[ \Psen + h \frac{\partial \Psen}{\partial n} \right] \right|_{\Gamma} + \mathcal{O}(h^2) = \left. \left[ p + h \frac{\rhosen}{\rho} \frac{\partial p}{\partial n} \right] \right|_{\Gamma} + \mathcal{O}(h^2).
\end{aligned}
\end{equation}

Therefore, combining (\ref{eq:main_measurement})-(\ref{eq:second_order_approx}) we obtain an expression for the measurements in terms of the acoustic pressure field (Dirichlet data) and its normal derivative (Neumann data) at the boundary $\Gamma$ as follows,
\begin{equation} \label{eq:eff_measurement}
\partial_{t}^2 \Meas \sim \left( \frac{1}{\rhosub} \Lambdasub p - \frac{1}{\rho} \frac{\partial p}{\partial n} \right) + h  \left[ \frac{\rhosen}{\rho \rhosub} \Lambdasub \frac{\partial p}{\partial n} + \left(\KK - 2\HH^2  \right) p \right] + \mathcal{O}(h^2)
\end{equation}
Neglecting the $\mathcal{O}(h^2)$ terms on the right-hand side of (\ref{eq:eff_measurement}) and using the effective boundary conditions (\ref{eq:Eff_bdy_condition})-(\ref{eq:Eff_bdy_condition0}), we obtain a simplified or first order model for the measurements,
\begin{equation} \label{eq:eff_measurement_simple}
\partial_{t}^2 \Meas \sim  \partial_{t}^{2} p + 2 \HH \frac{\rhosen}{\rhosub} \frac{\Csen^{2}}{\Csub}  \partial_{t} p +  2 \HH^2 \Csen^{2} \frac{\rhosen}{\rhosub} p - \Csen^{2} \Delta_{\Gamma} p   \qquad \text{on $(0,T) \times \Gamma$} 
\end{equation}
where we have used the definition of the operator $\Lambdasub$ given by (\ref{eq:DtN2}). In order to fully determine $\Meas$, initial conditions must be provided. In consistency with the PAT scenario, where the pressure field has vanishing initial Cauchy data in the exterior of $\Omega$, we let $\Meas = \partial_{t} \Meas = 0$ on $\{ t = 0\} \times \Gamma$.

To illustrate the response associated with this sensor design, we briefly analyze its behavior for plane waves. For convenience, we momentarily assume that $\Gamma$ is a plane through the origin. Both, the effective boundary condition (\ref{eq:Eff_bdy_condition0}) and the form of the measurements (\ref{eq:eff_measurement_simple}) play a role in this analysis. From (\ref{eq:DtN2}) we have $\Lambdasub = - \Csub^{-1} \partial_{t}$ because for a flat surface $\Gamma$ the mean curvature is $\HH = 0$. A plane wave $p_{\rm inc} = e^{i\left( \textbf{x} \cdot \textbf{k} - \omega t \right)}$ with incidence angle $\theta$, induces a reflection governed by the effective boundary condition (\ref{eq:Eff_bdy_condition0}). The superposition of the incident and reflected waves has the form
\begin{equation} \label{eq:plane_wave}
p(\textbf{x},t) = e^{i\left( \textbf{x} \cdot \textbf{k} - \omega t \right)} + R e^{i\left( \textbf{x} \cdot \textbf{k}_{\rm r} - \omega t \right)}
\end{equation}
where $R$ is the reflection coefficient, $\textbf{k}_{\rm r}$ is the reflection wavenumber, such that $|\textbf{k}| = |\textbf{k}_{\rm r}| = \omega/c$ and $\textbf{n} \cdot \textbf{k}_{\rm r} = - \textbf{n} \cdot \textbf{k}$ where $\textbf{n}$ is the outward normal vector. We also have $\textbf{n} \cdot \textbf{k} = |\textbf{k}| \cos \theta$. Once (\ref{eq:plane_wave}) is plugged into (\ref{eq:Eff_bdy_condition0}), the reflection coefficient is shown to satisfy
\begin{equation} \label{eq:reflection_coeff}
R = \frac{\cos \theta - \alpha}{ \cos \theta + \alpha}, \qquad \text{where} \quad \alpha = \frac{\rho c }{\rhosub \Csub}.
\end{equation}
Plugging (\ref{eq:plane_wave})-(\ref{eq:reflection_coeff}) into (\ref{eq:eff_measurement_simple}) and evaluating at the origin $\textbf{x} = \textbf{0}$, we find that the measurements satisfy
\begin{equation} \label{eq:meas_plane_wave}
\frac{\Meas}{p_{\rm inc}} \sim \left( 1 + \frac{\cos \theta - \alpha}{ \cos \theta + \alpha} \right) \left( 1 - \frac{\Csen^2}{c^2} \sin^2 \theta \right).
\end{equation}

\begin{figure}[htbp]
  \centering
  \includegraphics[width=0.75\textwidth]{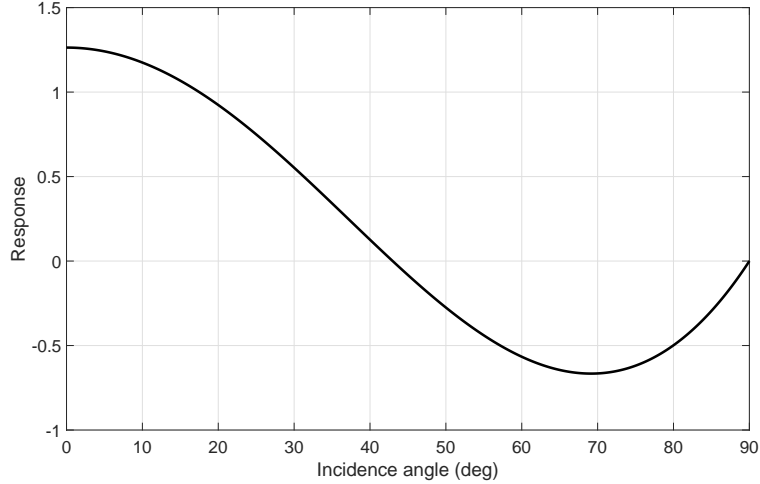}
  \caption{Directional response of Fabry--Perot measurements for plane waves. The parameters, taken from \cite{Cox2007c}, correspond to a  Parylene sensing film with (compressional) wave speed $\Csen = 2200$ m/s and density $\rhosen = 1180$ kg/m$^3$ and a polycarbonate backing substrate with (compressional) wave speed $\Csub = 2180$ m/s and density $\rhosub = 1180$ kg/m$^3$. The acoustic medium corresponds to water with wave speed $c = 1500$ m/s and density $\rho = 1000$ kg/m$^3$.}
  \label{fig:Response}
\end{figure}

\Cref{fig:Response} displays the directional response (\ref{eq:meas_plane_wave}) for plane waves as a function of the incidence angle $\theta$. The parameters were taken from \cite{Cox2007c} for a Parylene sensing film and polycarbonate backing substrate. \Cref{fig:Response} shows that incidence at approximately $42.99^{\circ}$ corresponds to a critical angle where the waves cause no particle motion in the normal direction. This occurs when the tangential phase speed of the acoustic wave equals the wave speed in the sensing film. The Fabry--Perot sensor does not capture such acoustic waves. We also note that for incidence angles greater than this critical angle, the pressure wave and the measurement have opposite signs. The response also approaches zero as the incidence angle approaches $90^{\circ}$.
Therefore, for incidence at tangential angles, the sensor design is not able to measure the pressure waves adequately.

The above are some of the features not taken into account when it is assumed that the actual pressure field (Dirichlet data) is acquired at the measurement boundary. As explained in the Introduction, the overly idealized assumption for ultrasound sensors is that they measure the pressure field at the boundary and that the sensors themselves do not perturb the pressure waves. This overly idealized model can be expressed as follows,
\begin{equation} \label{eq:overly_ideal_measurement}
\Meas \sim  p  \qquad \text{on $(0,T) \times \Gamma$,} \quad \text{and $\rhosen = \rhosub = \rho$ and $\Csen = \Csub = c$.} 
\end{equation}

%%%%%%%%%%%%%%%%%%%%%%%%%%%%%%%%%%%%%%%%%%%%%%%%%%%%%%%
%%%%%%%%%%%%%%%%   NEW SECTION   %%%%%%%%%%%%%%%%%%%%%%
%%%%%%%%%%%%%%%%%%%%%%%%%%%%%%%%%%%%%%%%%%%%%%%%%%%%%%%
\section{Main mathematical results}
\label{sec:main}

Here we define the photoacoustic tomography problem in terms of the wave equation, the effective boundary condition (\ref{eq:Eff_bdy_condition0}) and the ultrasound measurements modeled by (\ref{eq:eff_measurement_simple}). We also prove the solvability of this problem under the following geometric condition for the wave speed $c$ and the domain $\Omega$. 

\begin{assumption}[Non--trapping Condition] \label{assumption:nontrapping}
Let $\Omega$ be a simply-connected bounded domain with smooth boundary $\Gamma$. Assume there exists $T_{\rm o} < \infty$ such that any geodesic ray of the manifold $(\Omega, c^{-2} dx^2)$, originating from any point in $\Omega$ at time $t=0$ reached the boundary $\Gamma$ at a non-diffractive point before $t = T_{\rm o}$. 
\end{assumption}

The forward mapping, which we seek to invert, is given by
\begin{equation} \label{eq:forward_map}
\FF : \left( p_{0},p_{1} \right) \mapsto \Meas
\end{equation}
where the measurement $\Meas$ satisfies (\ref{eq:eff_measurement_simple}) with vanishing initial Cauchy data. The initial velocity $p_{1}$  is known to be zero in the context of PAT. However, the mathematical theory allows us to recover it as well.  The pressure field $p$ solves the following initial boundary value problem,
\begin{equation} \label{eq:IBVP}
\begin{aligned} 
\partial_{t}^2 p &= c^{2} \Delta p \qquad  &&  \text{in $(0,T) \times \Omega$,} \\
\rhosub  \partial_{n} p &= \rho \Lambdasub p  \qquad  &&  \text{on $(0,T) \times \Gamma$,} \\
p = p_{0} \quad &\text{and} \quad \partial_{t} p = - p_{1} \qquad  &&  \text{on $\{ t = 0 \} \times \Omega$.}
\end{aligned}
\end{equation}
The well-posedness of this problem for $\left( p_{0},p_{1} \right) \in H^{1}_{0}(\Omega) \times H^{0}(\Omega)$ has been established. See for instance \cite{EvansPDE,Lasiecka1986,Lio-Mag-Book-1972}. The unique solution satisfies $p \in C([0,T];H^{1}(\Omega))$, $\partial_{t} p \in C([0,T];H^{0}(\Omega))$, $p|_{\Gamma} \in H^{1}((0,T) \times \Gamma)$ and $\partial_{n} p \in H^{0}((0,T)\times \Gamma)$. We work with the standard Sobolev spaces based on square-integrable functions over $\Omega$ or $(0,T) \times \Gamma$. The
associated inner-product extends naturally as the duality pairing between functionals and functions. We should interpret the Hilbert space $H^{0}(\Omega)$ with the inner-product appropriately
weighted by $c^{-2}$ so that $c^2 \Delta$ is formally self-adjoint with respect to the duality
pairing of $H^{0}(\Omega)$.

Now we state our main result in the form of a theorem.

\begin{theorem}[Main Result]  \label{thm:main}
Under the non--trapping \cref{assumption:nontrapping} for the manifold $(\Omega,c^{-2} dx^2)$ and time $T> T_{\rm o}$, the forward mapping $\FF: H^{1}_{0}(\Omega) \times H^{0}(\Omega) \to H^{0}((0,T);H^{1}(\Gamma))$ is injective, that is, the photoacoustic tomography problem is uniquely solvable. Moreover, the following stability estimate holds 
\begin{equation} \label{eq:Stability0}
\| \left( p_{0},p_{1} \right) \|_{H^{0}(\Omega) \times H^{-1}(\Omega)} \leq C \| \Meas \|_{H^{0}((0,T);H^{1}(\Gamma))}
\end{equation}
for some constant $C>0$.
\end{theorem}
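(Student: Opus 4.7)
The plan is to factor the forward map $\FF$ as a composition of two stages and to handle each separately. Stage~1: invert the measurement postprocessing of \Cref{sec:modeling_measurements} to recover the full lateral Cauchy data $(p|_\Gamma, \partial_n p|_\Gamma)$ of the acoustic field from $\Meas$. Stage~2: apply a classical non--trapping observability for the wave equation to recover $(p_0, p_1)$ from this lateral Cauchy data. Injectivity and the stability bound (\ref{eq:Stability0}) both follow by composing the two stages.

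For Stage~1, I would rewrite the measurement law (\ref{eq:eff_measurement_simple}) as a damped wave equation on the closed surface $\Gamma$ for the unknown $f := p|_\Gamma$, of the form
\begin{equation*}
L f := \partial_t^2 f + a_1 \, \partial_t f + a_0 \, f - \Csen^2 \Delta_\Gamma f = \partial_t^2 \Meas \qquad \text{on } (0,T) \times \Gamma,
\end{equation*}
where $a_1, a_0$ are smooth coefficients involving the mean curvature $\HH$. The operator $L$ is strictly hyperbolic on the compact manifold $\Gamma$ (no boundary) with bounded damping, hence the Cauchy problem for $L$ is well--posed. Because $p_0 \in H^1_0(\Omega)$ forces $f(0,\cdot) = 0$, and $\Meas$ vanishes at $t=0$ together with its first time derivative, integrating twice in $t$ converts the relation into a Volterra--type equation for $f$ with source $\Meas$. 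Standard energy estimates for the damped wave equation on $\Gamma$ produce a bounded inverse mapping $\Meas \in H^{0}((0,T);H^{1}(\Gamma))$ to $f \in H^{1}((0,T)\times\Gamma)$, in line with the hidden regularity of $p|_\Gamma$ recalled after (\ref{eq:IBVP}). Once $f$ is recovered, the effective boundary condition (\ref{eq:Eff_bdy_condition0}) immediately delivers $\partial_n p|_\Gamma = (\rho/\rhosub) \Lambdasub f$ with the expected loss of one time derivative, landing in $H^{0}((0,T)\times \Gamma)$.

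For Stage~2, with the pair $(p|_\Gamma, \partial_n p|_\Gamma)$ now known on $(0,T) \times \Gamma$ for $T > T_{\rm o}$, the problem reduces to recovering $(p_0, p_1)$ from lateral Cauchy data of the wave equation in $(0,T)\times\Omega$. Under \cref{assumption:nontrapping}, every bicharacteristic reaches $\Gamma$ transversally before time $T_{\rm o}$, so the geometric control condition holds; the sharp unique continuation theorem for the wave equation then yields injectivity, and the multiplier/control approach (or equivalently the boundary control method) yields the quantitative observability estimate
\begin{equation*}
\|(p_0, p_1)\|_{H^{0}(\Omega) \times H^{-1}(\Omega)} \le C \left( \|p|_\Gamma\|_{H^{0}((0,T);H^{1}(\Gamma))} + \|\partial_n p|_\Gamma\|_{H^{-1}((0,T);H^{0}(\Gamma))} \right),
\end{equation*}
after an appropriate duality shift compatible with the weaker energy level $H^{0}\times H^{-1}$. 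Chained with the bounded inversion of Stage~1, this gives (\ref{eq:Stability0}) and, a fortiori, injectivity of $\FF$.

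The main obstacle I expect is the precise functional setup in Stage~1. The Cauchy data at $t=0$ for the induced equation $Lf = \partial_t^2\Meas$ requires $\partial_t f|_{t=0} = -p_1|_\Gamma$, which is ill--defined since $p_1 \in H^{0}(\Omega)$ has no trace on $\Gamma$. I would circumvent this by first establishing the inversion of $L$ for smooth dense data $(p_0,p_1) \in C_c^\infty \times C_c^\infty$ with $p_0|_\Gamma = 0$, recording an estimate that does \emph{not} involve the unrealizable trace $p_1|_\Gamma$ but instead uses only global norms of $\Meas$ and the hidden regularity of $p|_\Gamma$ for the absorbing--boundary wave problem. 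The estimate then extends by density. A secondary, more routine, technical point is controlling the curvature--dependent zeroth-- and first--order perturbations in $L$ via a Grönwall argument on the natural energy of $L$ on $\Gamma$.
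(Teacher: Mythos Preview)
Your plan is viable but takes a genuinely different route from the paper. The paper argues by duality: it shows that the adjoint $\FF^{*}$ is surjective by factoring $\FF^{*} = \mathcal{G}_{2} \circ \mathcal{G}_{1}$, where $\mathcal{G}_{2}$ is the Neumann--type boundary control map for (\ref{eq:IBVP_adj_aux}) (surjective by Bardos--Lebeau--Rauch under \cref{assumption:nontrapping}) and $\mathcal{G}_{1}$ is the adjoint of your measurement postprocessing, shown surjective by solving the \emph{backward} damped wave equation $L^{*}\Psi = \varphi$ on the closed manifold $\Gamma$ with zero Cauchy data at $t=T$; injectivity of $\FF$ and the stability estimate (\ref{eq:Stability0}) then drop out of the closed--range/Open Mapping Theorem. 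Your direct factorization of $\FF$ is precisely the dual of this. The paper's route is cleaner exactly because it sidesteps the obstacle you flagged: in the adjoint picture the Cauchy data at $t=T$ vanish by construction, so the ill--defined trace $p_{1}|_{\Gamma}$ never appears and no density argument is needed. Your density workaround is correct in principle, but one point to tighten: the ``Volterra'' framing is misleading, since the term $\Csen^{2}\,\partial_{t}^{-2}\Delta_{\Gamma} f$ loses two tangential derivatives that time integration does not restore, so the integrated equation is not contractive in any fixed Sobolev scale on $\Gamma$. The correct mechanism is the one you also mention---hyperbolic well--posedness of $L$ on the closed manifold $\Gamma$---and that is exactly what the paper invokes for $\mathcal{G}_{1}$, just on the adjoint side. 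In short: same two ingredients (controllability/observability in $\Omega$ and the surface wave equation on $\Gamma$), but the paper packages them via $\FF^{*}$ to avoid the initial--trace nuisance.
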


We proceed to prove this theorem by showing that the adjoint of the forward mapping is surjective. This adjoint mapping $\FF^{*}$ is given by 
\begin{equation} \label{eq:adjoint_map}
\FF^{*} : \psi \mapsto \left( \partial_{t} \xi(0) , \xi(0) \right)
\end{equation}
where $\xi$ solves the following backwards--in--time boundary value problem,
\begin{equation} \label{eq:IBVP_adj}
\begin{aligned} 
 \partial_{t}^2 \xi &= c^{2} \Delta\xi \qquad  &&  \text{in $(0,T) \times \Omega$,} \\
\rhosub \partial_{n} \xi - \rho \Lambdasub^{*} \xi &= \rhosub \left( \partial_{t}^2 + a \partial_t + b - \Csen^{2}\Delta_{\Gamma} \right)^{*} \left( \partial_{t}^{-2} \right)^{*} \psi \qquad  &&  \text{on $(0,T) \times \Gamma$,} \\
\xi = 0 \quad &\text{and} \quad \partial_{t} \xi = 0 \qquad  &&  \text{on $\{ t = T \} \times \Omega$,}
\end{aligned}
\end{equation}
where $a = 2 \HH  \Csen^2 \rhosen/ ( \Csub \rhosub)$ and $b = 2 \HH^2 \Csen^2 \rhosen / \rhosub$ are constants. The operator $\partial_{t}^{-2}$ can be defined as
\begin{equation} \label{eq:anti-der}
\left( \partial_{t}^{-2} v \right)(t) = \int_{0}^{t} \int_{0}^{\tau} v(s) ds d\tau.
\end{equation}
Notice that $\partial_{t}^{-2} \partial_{t}^{2} v = \partial_{t}^{2} \partial_{t}^{-2} v = v$ for all sufficiently smooth $v$ such that $v|_{t=0} = \partial_{t} v|_{t=0}= 0$. Also notice that $\partial_{t}^{2}$ is formally self--adjoint. In particular, 
\begin{equation} \label{eq:self_adjoint_second_der}
\left( \partial_{t}^{2} v, \phi \right)_{H^{0}((0,T)\times \Gamma)} = \left( v, \partial_{t}^{2} \phi \right)_{H^{0}((0,T)\times \Gamma)}
\end{equation}
for all sufficiently regular $v$ and $\phi$ such that $v|_{t=0} = \partial_{t} v|_{t=0}= 0$ and $\phi|_{t=T} = \partial_{t} \phi|_{t=T}= 0$. The Laplace--Beltrami operator $\Delta_{\Gamma}$ is also self--adjoint since the manifold $\Gamma$ has no boundary. The nonreflecting operator $\Lambdasub$ defined in (\ref{eq:DtN2}) has an adjoint given by $\Lambdasub^{*} = \Csub^{-1} \partial_{t} - \HH$.
Therefore, it stays as a differential operator with first and zeroth order terms.

The statement of \cref{thm:main} is a direct consequence of the following lemma.

\begin{lemma}\label{lemma:surjectivity}
Under the non--trapping \cref{assumption:nontrapping} for the manifold $(\Omega,c^{-2} dx^2)$ and time $T> T_{\rm o}$, the operator $\FF^{*} : H^{0}((0,T); H^{-1}(\Gamma)) \to H^{0}(\Omega) \times H^{1}(\Omega)$ is surjective.
\end{lemma}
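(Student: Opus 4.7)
The plan is to prove \cref{lemma:surjectivity} by establishing an observability-type estimate for the forward operator and invoking the Banach closed range theorem. By a standard transposition / hidden-regularity argument, $\FF$ extends by continuity to a bounded linear operator from $H^{0}(\Omega)\times H^{-1}(\Omega)$ into $H^{0}((0,T);H^{1}(\Gamma))$, and its Hilbert-space adjoint is precisely the operator $\FF^{*} : H^{0}((0,T);H^{-1}(\Gamma)) \to H^{0}(\Omega)\times H^{1}(\Omega)$ described in \eqref{eq:adjoint_map}--\eqref{eq:IBVP_adj}. Surjectivity of $\FF^{*}$ onto $H^{0}(\Omega)\times H^{1}(\Omega)$ is then equivalent, by the closed range theorem, to $\FF$ being bounded below on $H^{0}(\Omega)\times H^{-1}(\Omega)$, i.e., to the stability estimate \eqref{eq:Stability0}. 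The bulk of the argument is therefore devoted to proving this observability inequality for the damped initial-boundary value problem \eqref{eq:IBVP}.

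\textbf{Main steps.} I would proceed in three stages. First, a transposition argument extends the solution map of \eqref{eq:IBVP} to initial data in $H^{0}(\Omega)\times H^{-1}(\Omega)$, and a hidden-regularity result adapted from the classical wave-equation estimates to the impedance boundary condition places $p|_{\Gamma}$ in $H^{1}((0,T)\times\Gamma)$ continuously in $(p_{0},p_{1})$, which in turn gives $\Meas \in H^{0}((0,T);H^{1}(\Gamma))$ by the definition \eqref{eq:eff_measurement_simple}. Second, the dissipative character of $\Lambdasub = -\Csub^{-1}\partial_{t}-\HH$ enters through a standard energy identity: multiplying the wave equation by $\partial_{t}p$ and integrating by parts produces a boundary contribution of the form $\int_{(0,T)\times\Gamma}(\rho/\rhosub\Csub)|\partial_{t}p|^{2}\,d\sigma\,dt$ with the favorable sign, so the initial acoustic energy is absorbed by the boundary and controlled from above by a boundary integral. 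Third, propagation of singularities for wave equations with boundary (Bardos--Lebeau--Rauch and Melrose--Sjöstrand), combined with \cref{assumption:nontrapping}, ensures that every bicharacteristic originating in $\Omega$ strikes $\Gamma$ at a non-diffractive point before $T_{\rm o}<T$, so no high-frequency mode remains microlocally invisible. A compactness-uniqueness argument, relying on unique continuation (e.g. of Tataru type) for the homogeneous adjoint problem, then absorbs a lower-order compact remainder and yields the desired bound at the level $(p_{0},p_{1}) \in H^{0}\times H^{-1}$.

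\textbf{Main obstacle.} The principal difficulty lies in the unusual form of the measurement \eqref{eq:eff_measurement_simple}, which is not the Dirichlet trace itself but $\partial_{t}^{-2}\bigl(\partial_{t}^{2} + a\partial_{t} + b - \Csen^{2}\Delta_{\Gamma}\bigr) p|_{\Gamma}$. Two issues demand care: the smoothing factor $\partial_{t}^{-2}$ means that translating norms of $\Meas$ into norms of $p|_{\Gamma}$ requires analyzing the invertibility of $\partial_{t}^{2}+a\partial_{t}+b-\Csen^{2}\Delta_{\Gamma}$ modulo polynomial-in-$t$ modes, the latter being suppressed by the vanishing initial conditions on $\Meas$; and the directional-response calculation of \Cref{sec:modeling_measurements} shows that the symbol of this operator vanishes at the critical incidence angle $\theta_{c}=\arcsin(c/\Csen)$, so isolated tangential plane-wave modes are microlocally invisible to the sensor. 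The non-trapping hypothesis is essential precisely because it forces every bicharacteristic to revisit $\Gamma$ at multiple points with varying angles, ruling out persistent cancellation along a single propagating ray. Formalizing this via a careful symbol analysis of the measurement operator along each incoming bicharacteristic, and combining with unique continuation to dispose of a finite-dimensional kernel, is where I expect the main technical effort to be concentrated.
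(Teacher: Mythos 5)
Your overall strategy --- prove the observability (stability) estimate for $\FF$ directly and then obtain surjectivity of $\FF^{*}$ from the closed range theorem --- is logically equivalent to the statement, but it inverts the paper's order of deduction and is a genuinely different route. The paper never attacks observability head-on: it factors $\FF^{*} = \mathcal{G}_{2}\circ\mathcal{G}_{1}$, where $\mathcal{G}_{2}: \varphi \mapsto (\partial_{t}\xi(0),\xi(0))$ defined by \eqref{eq:IBVP_adj_aux} is surjective under \cref{assumption:nontrapping} by exact boundary controllability, and $\mathcal{G}_{1}$ is surjective because \eqref{eq:IBVP_bdy} is a solvable wave-type equation on the compact boundaryless manifold $\Gamma$. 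The stability estimate \eqref{eq:Stability0} is then a corollary of the lemma, not its source. The decisive advantage of this factorization is that the feature you correctly flag as the ``main obstacle'' --- the operator $\partial_{t}^{2}+a\partial_{t}+b-\Csen^{2}\Delta_{\Gamma}$ is hyperbolic on $(0,T)\times\Gamma$, with characteristic set corresponding to the critical incidence angle --- becomes an asset rather than an obstruction: surjectivity of $\mathcal{G}_{1}$ only requires \emph{solvability} of this boundary wave equation, which hyperbolicity guarantees, and the attendant loss of one derivative is absorbed by the choice of spaces ($H^{0}\times H^{-1}$ on the left of \eqref{eq:Stability0} rather than $H^{1}_{0}\times H^{0}$).

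By contrast, your route places the entire burden on a direct microlocal observability estimate that the proposal does not actually establish. Your first two stages are sound (hidden regularity; the dissipative energy identity for $\Lambdasub$), and under geometric control they yield a bound of the initial energy by the boundary trace $\partial_{t}p|_{\Gamma}$ in $H^{0}((0,T)\times\Gamma)$. The gap is in passing from $\| \Meas \|_{H^{0}((0,T);H^{1}(\Gamma))}$ back to a norm of $p|_{\Gamma}$: since the measurement operator is characteristic at the critical angle, this is not an elliptic inversion, and your stated reason for why it nonetheless succeeds --- that non-trapping ``forces every bicharacteristic to revisit $\Gamma$ at multiple points with varying angles'' --- does not follow from \cref{assumption:nontrapping}, which only guarantees one non-diffractive boundary hit per ray before $T_{\rm o}$; a ray striking $\Gamma$ exactly at the critical angle and then leaving the relevant time window is not excluded by the hypothesis as stated. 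Moreover, the compactness--uniqueness step you invoke presupposes the qualitative uniqueness statement $\Meas=0\Rightarrow(p_{0},p_{1})=0$, which is essentially the injectivity being proved and is itself nontrivial for this non-elliptic measurement operator. As written, the proposal diagnoses the difficulty accurately but does not overcome it; the paper's dual factorization is precisely the device that makes that difficulty disappear.
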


\begin{proof}
The mapping $\FF^{*}$ can be composed as $\FF^{*} = \mathcal{G}_{2} \circ \mathcal{G}_{1}$ where
\begin{equation} \label{eq:Components}
\begin{aligned} 
\mathcal{G}_{1} &: \psi \mapsto \varphi, \\
\mathcal{G}_{2} &: \varphi \mapsto \left( \partial_{t} \xi(0) , \xi(0) \right),
\end{aligned}
\end{equation}
where the mapping $\mathcal{G}_{1}$ is given by 
\begin{equation} \label{eq:IBVP_bdy}
\begin{aligned} 
\varphi = \partial_{t}^2 \Psi - a \partial_{t} \Psi + b \Psi - \Csen^{2} \Delta_{\Gamma} \Psi \qquad  &&  \text{in $(0,T) \times \Gamma$}
\end{aligned}
\end{equation}
where $\Psi$ has vanishing Cauchy data at $t=T$ and solves $\partial_{t}^{2} \Psi = \rhosub \psi$. The mapping $\mathcal{G}_{2}$ is defined via $\xi$, the solution of 
\begin{equation} \label{eq:IBVP_adj_aux}
\begin{aligned} 
 \partial_{t}^2 \xi &= c^{2} \Delta\xi \qquad  &&  \text{in $(0,T) \times \Omega$,} \\
\rhosub \partial_{n} \xi - \rho \Lambdasub^{*} \xi &= \varphi \qquad  &&  \text{on $(0,T) \times \Gamma$,} \\
\xi = 0 \quad &\text{and} \quad \partial_{t} \xi = 0 \qquad  &&  \text{on $\{ t = T \} \times \Omega$.}
\end{aligned}
\end{equation}

Under the non--trapping assumption, the mapping $\mathcal{G}_{2} : \varphi \mapsto \left( \partial_{t} \xi(0) , \xi(0) \right)$ defined by (\ref{eq:IBVP_adj_aux}), is well--known to be surjective from $H^{0}((0,T) ; H^{0}(\Gamma))$ onto $H^{0}(\Omega) \times H^{1}(\Omega)$. That is the central theme of exact boundary controllability for the wave equation. See \cite[Ch 6]{GlowinskiLionsHe2008} and \cite[Corollary 4.10]{Bardos1992}.

Hence, it only remains to show that the mapping $\mathcal{G}_{1}$ is surjective. This can be accomplished by proving that  equation (\ref{eq:IBVP_bdy}) is solvable for any forcing term $\varphi \in H^{0}((0,T) ; H^{0}(\Gamma))$ such that $\Psi$ has vanishing Cauchy data at $t=T$ and $\partial_{t}^{2} \Psi \in H^{0}((0,T);H^{-1}(\Gamma))$. This solvability is a well--established result. See \cite[Theorems 3-5 in \S 7.2]{EvansPDE}, \cite{Lasiecka1986} and \cite[Ch 3 \S 8, Thm 8.1]{Lio-Mag-Book-1972}. As a consequence, the operator $\FF^{*}$ is surjective from $H^{0}((0,T); H^{-1}(\Gamma))$ onto $H^{0}(\Omega) \times H^{1}(\Omega)$.
\end{proof}

This \cref{lemma:surjectivity} renders the proof of \cref{thm:main}. Indeed, we first notice that since $\FF^{*} : H^{0}((0,T); H^{-1}(\Gamma)) \to H^{0}(\Omega) \times H^{1}(\Omega)$ is well-defined and surjective, then $\FF: H^{0}(\Omega) \times H^{-1}(\Omega) \to H^{0}((0,T);H^{1}(\Gamma))$ is well-defined, injective, and has a closed range. The stability estimate (\ref{eq:Stability0}) then follows from the Open Mapping Theorem (see \cite[Ch 2]{McLean2000} or 
\cite[Ch 2]{Drabek-Milota-2007}).

%%%%%%%%%%%%%%%%%%%%%%%%%%%%%%%%%%%%%%%%%%%%%%%%%%%%%%%
%%%%%%%%%%%%%%%%   NEW SECTION   %%%%%%%%%%%%%%%%%%%%%%
%%%%%%%%%%%%%%%%%%%%%%%%%%%%%%%%%%%%%%%%%%%%%%%%%%%%%%%
\section{Numerical results}
\label{sec:alg}

In this section we implement reconstruction algorithms to the solve the PAT problem at the discrete level. The reconstructions are based on the Landweber iterative method \cite[Ch. 6]{EnglBook2000}. In the context of PAT, the Landweber iteration has been employed previously because of its simplicity and compatibility with regularization methods \cite{
Belhachmi2016,Ding2015,Haltmeier2017b,Haltmeier2018c,Haltmeier2017,Nguyen2018,Stefanov2017a}. Other approaches, such as the conjugate gradient method, may also be employed to solve PAT problems \cite{Acosta-Montalto-2015,Acosta-Montalto-2016,Haltmeier2017,Modgil2010,Nguyen2018,
Wang2011a,WangHuang2013,Wang2012b,Xu2003}. The Landweber iteration is defined in \cref{alg:landweber}.

\begin{algorithm}
\caption{Landweber iteration}
\label{alg:landweber}
\begin{algorithmic}
\STATE{Initial guess $\phi_{0} = 0$.}
\STATE{Set $0 < \gamma < \| \FF \|^{-2}$.}
\FOR{$k=0,1,2,...$}
\STATE{$\phi_{k+1} = \phi_{k} - \gamma \FF^{*} \left( \FF \phi_{k} - \Meas \right)$}
\ENDFOR
\end{algorithmic}
\end{algorithm}

\begin{figure}[htbp]
  \centering
  \includegraphics[width=0.48\textwidth]{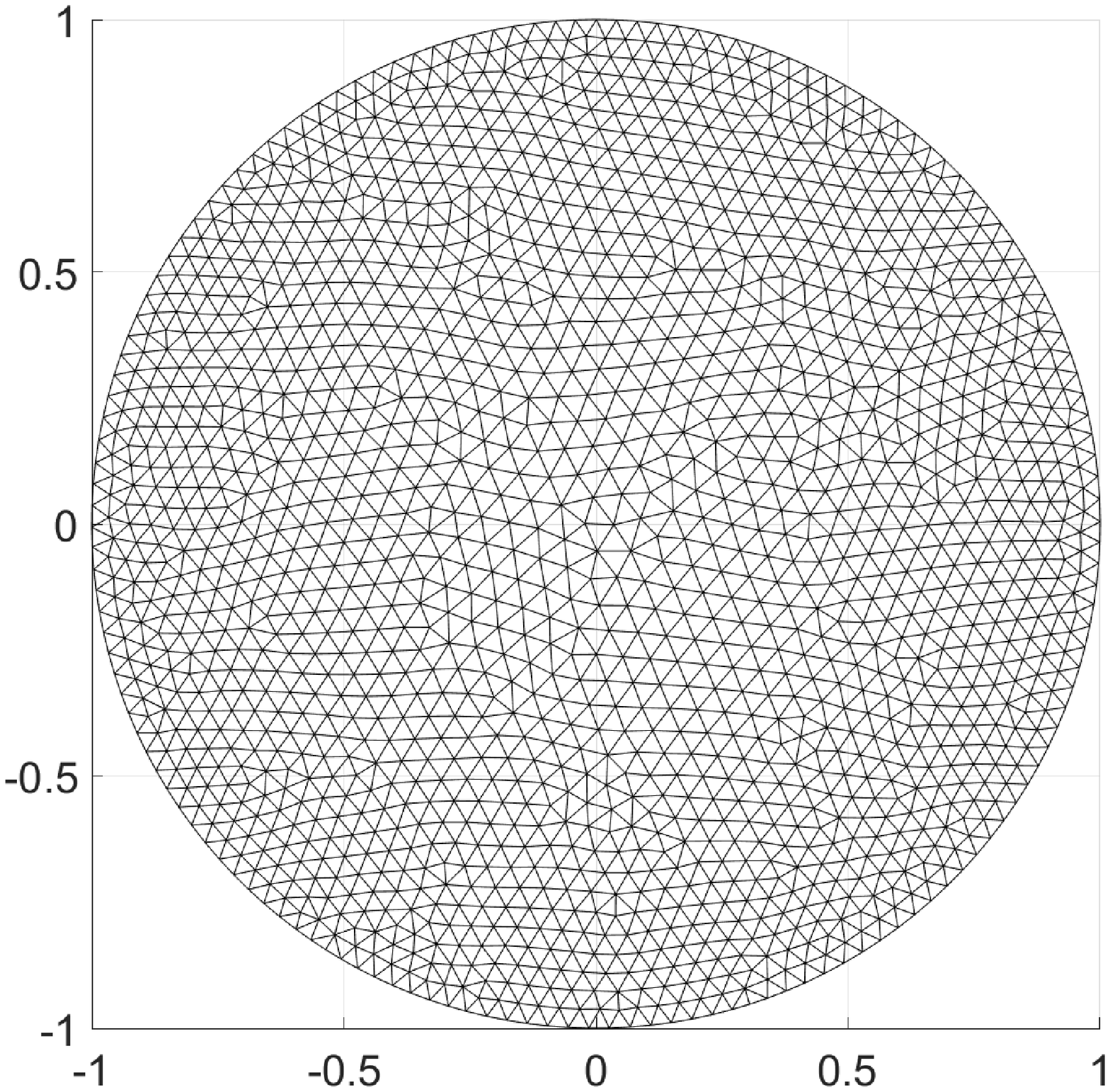}
  \includegraphics[width=0.48\textwidth]{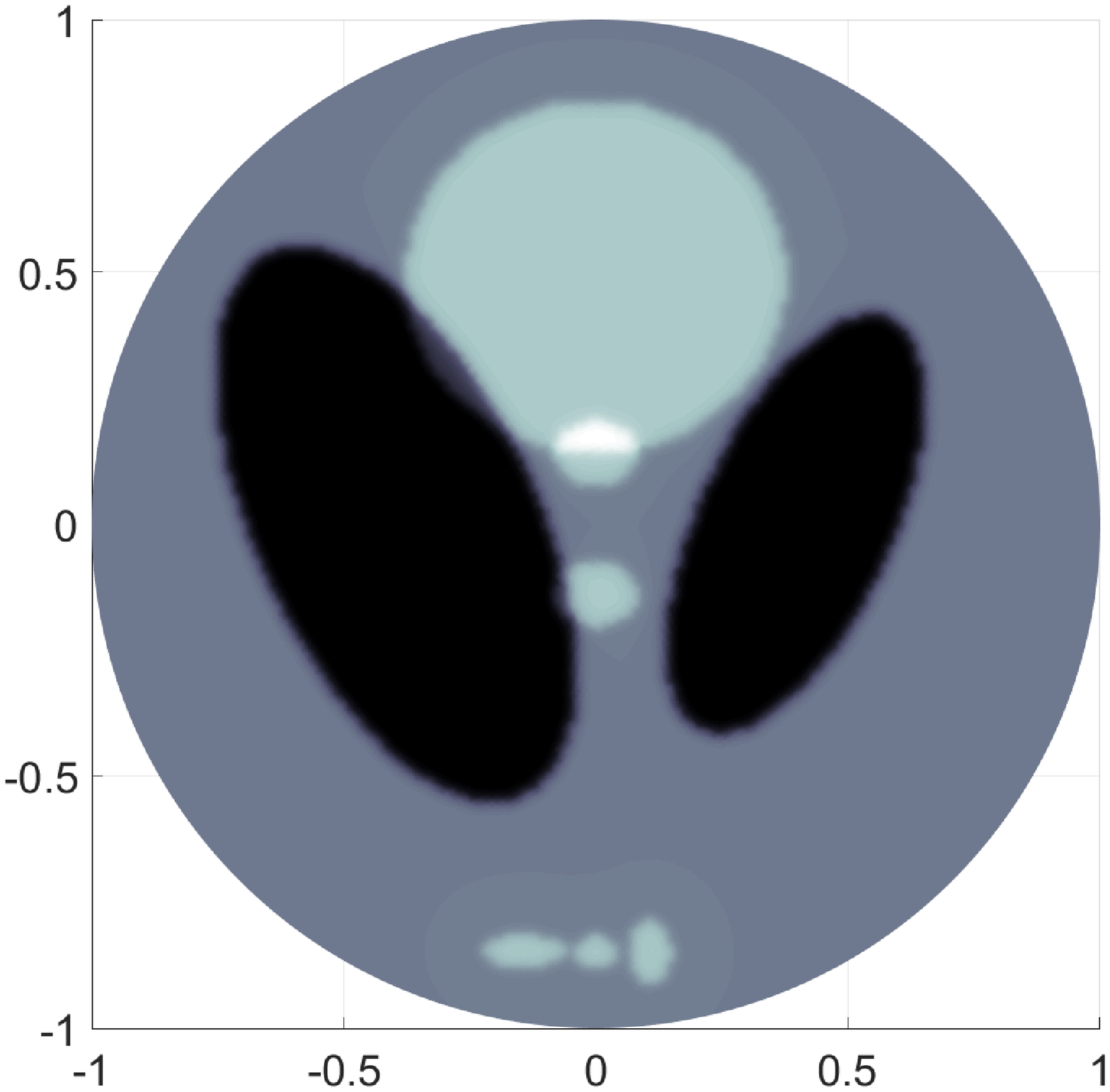}
  \caption{Coarse triangulation for the FEM (left) and exact profile to be reconstructed (right).}
  \label{fig:Exact}
\end{figure}

The discretizations of the forward map (\ref{eq:forward_map}) and adjoint map (\ref{eq:adjoint_map}) are based on a piecewise linear finite element method (FEM) and second order finite difference for the time derivatives in the initial boundary value problems (\ref{eq:IBVP}) and (\ref{eq:IBVP_adj}), respectively. The discretization parameters were chosen to satisfy the CFL stability condition. The FEM was implemented on triangular meshes of the unit disk and the physical parameters, described in \cref{fig:Response}, were non-dimensionalized accordingly. \Cref{fig:Exact} shows a coarse triangular mesh and the Shepp--Logan phantom to be reconstructed. Measured data was synthetically generated by discretizing the forward operator $\FF$ using the FEM method. In all simulations, the mesh employed to generate the measurements had mesh size about half of the mesh size for the mesh employed to reconstruct the phantom.

\begin{figure}[htbp]
  \centering
  \includegraphics[width=0.85\textwidth]{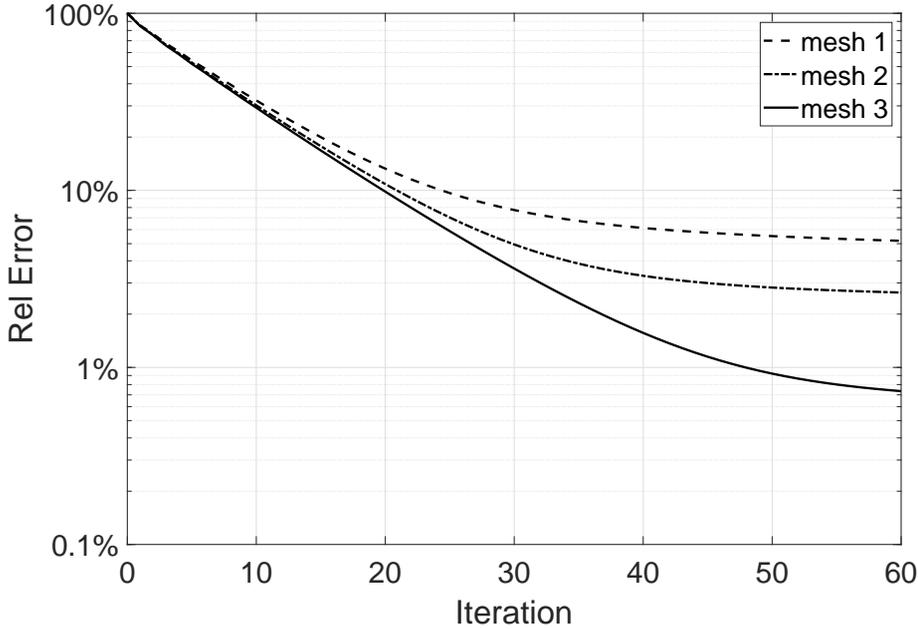}
  \caption{Relative error for the Landweber iterations. The physical parameters, same as in \cref{fig:Response}, were non-dimensionalized for these simulations. The mesh refinement leads to the numbers of degrees of freedom 1713, 6875 and 27248 for the meshes 1, 2 and 3, respectively. This corresponds to halving the mesh size in consecutive refinements.}
  \label{fig:Error}
\end{figure}

\Cref{fig:Error} displays the error $\| \phi_{k} - p_{0} \|$ (where $p_{0}$ is the exact solution) as a function of the iteration number $k \geq 0$ of the Landweber method. The figure shows results for three FEM meshes that were consecutively refined by halving the mesh size. We notice that initially, the error decays exponentially in $k$ (as the theory of this method predicts) but then it stagnates. The stagnation level decreases with mesh refinement. This phenomenon may be attributed to the fact that the discrete version of $\FF^{*}$ is not the actual adjoint of the discrete version of $\FF$. Thus, the discretization of the normal operator $\FF^{*} \FF$ is not symmetric positive definite as this method requires. However, as the mesh is refined, we expect this error to reduce. Implementation of an exact numerical adjoint, as done by Huang \textit{et al.} \cite{Huang2013}, could remedy this issue.

Lastly, we compare the reconstruction of the initial pressure profile obtained by accounting for the  structure of the Fabry--Perot measurement model (\ref{eq:eff_measurement_simple}) against the reconstruction obtained from the overly idealized (but commonly assumed) model (\ref{eq:overly_ideal_measurement}). The latter reconstruction is obtained by synthetically producing the measurements following the model (\ref{eq:eff_measurement_simple}), but then incorrectly assuming that the measurements satisfy (\ref{eq:overly_ideal_measurement}). \Cref{fig:Recon} displays the reconstruction results for both measurement models using 60 iterations of the Landweber method. For the reconstruction following the proposed model (\ref{eq:eff_measurement_simple}), the relative error is $0.73\%$. For the reconstruction following the overly idealized model (\ref{eq:overly_ideal_measurement}), the relative error is $23.02\%$.

\begin{figure}[htbp]
  \centering
  \begin{subfigure}[b]{1.0\textwidth}
  \includegraphics[width=0.48\textwidth]{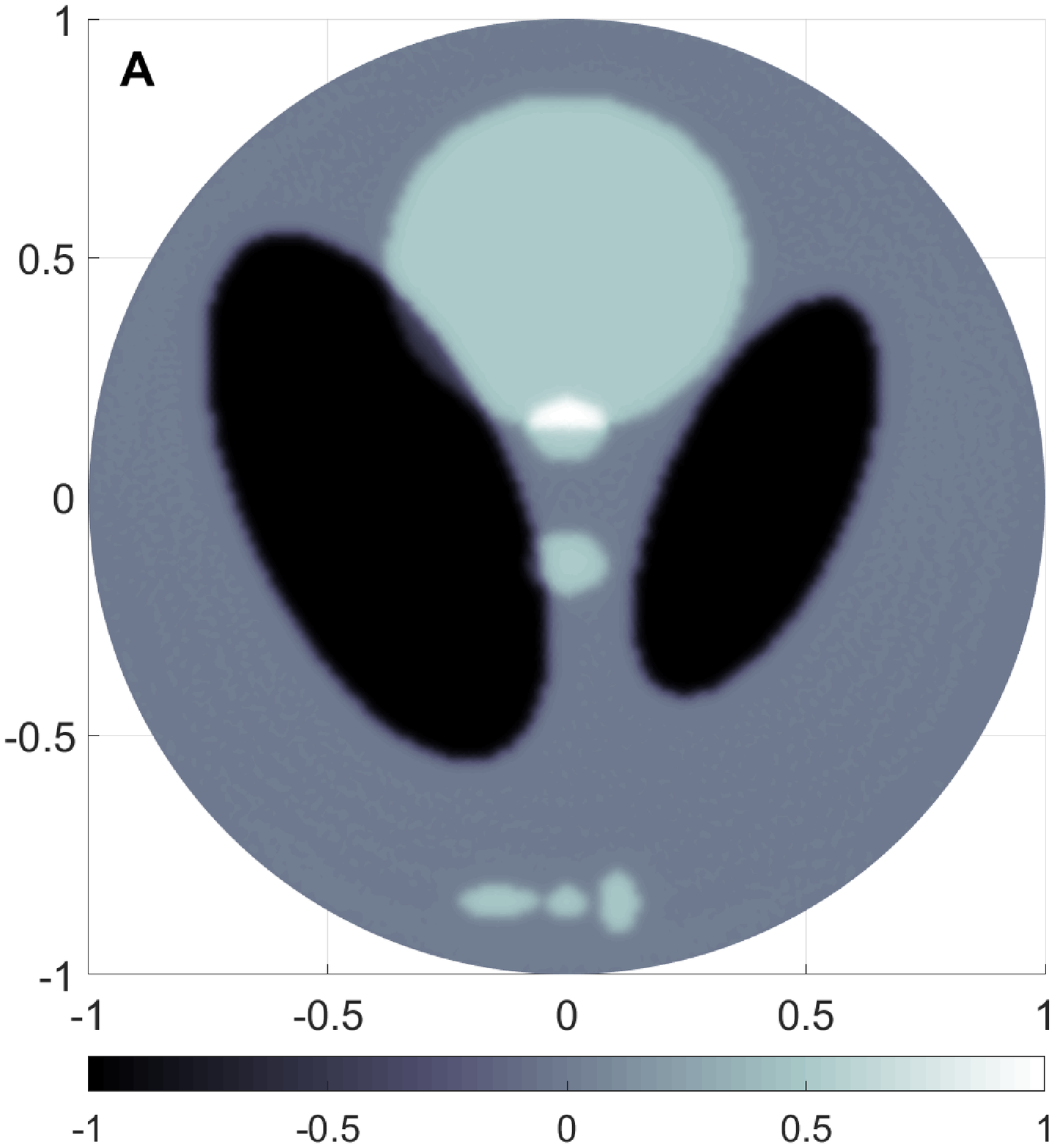}
  \includegraphics[width=0.48\textwidth]{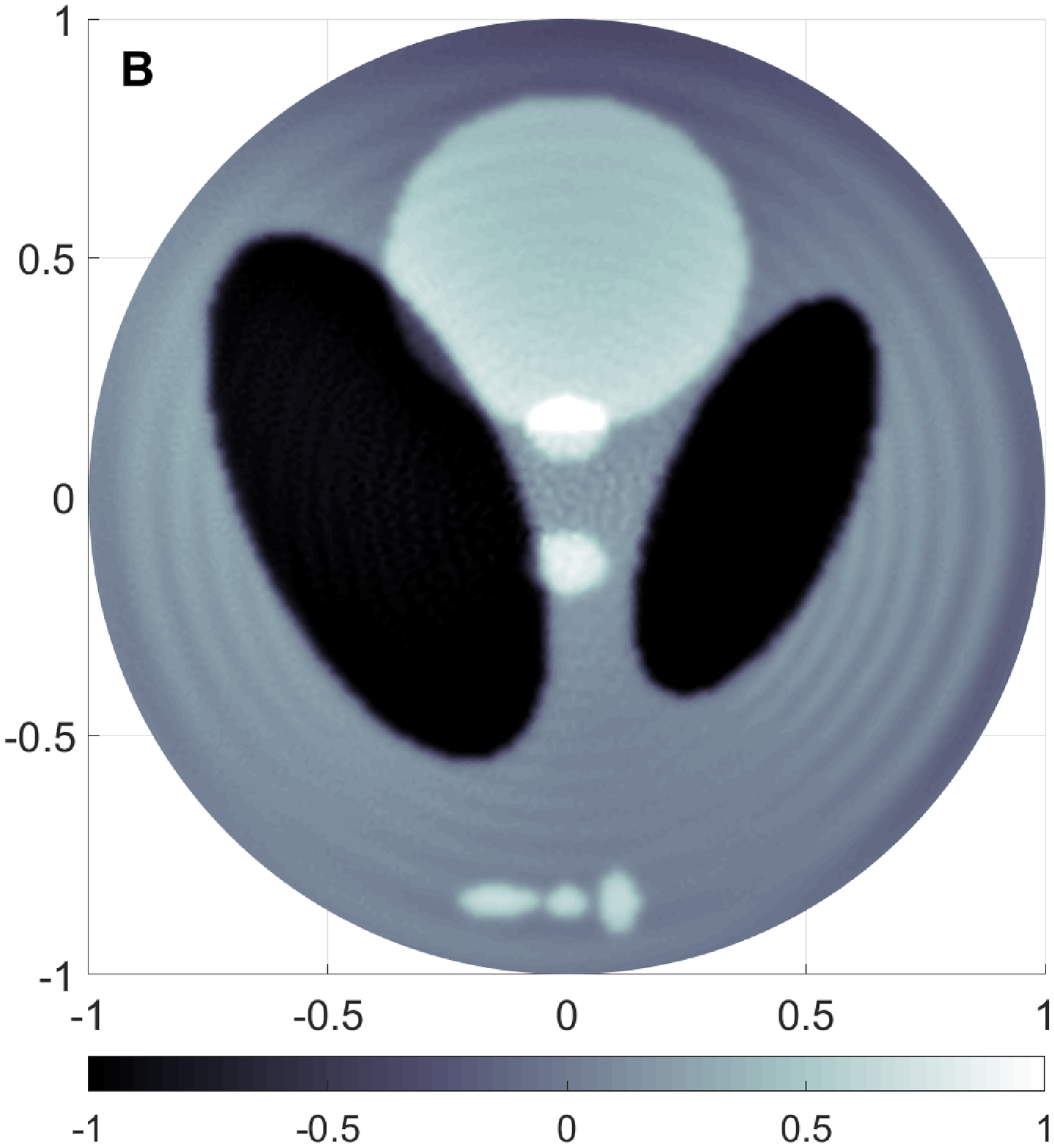}
  \vspace{-1em}
  \end{subfigure} \\
  \begin{subfigure}[b]{1.0\textwidth}
  \includegraphics[width=0.48\textwidth]{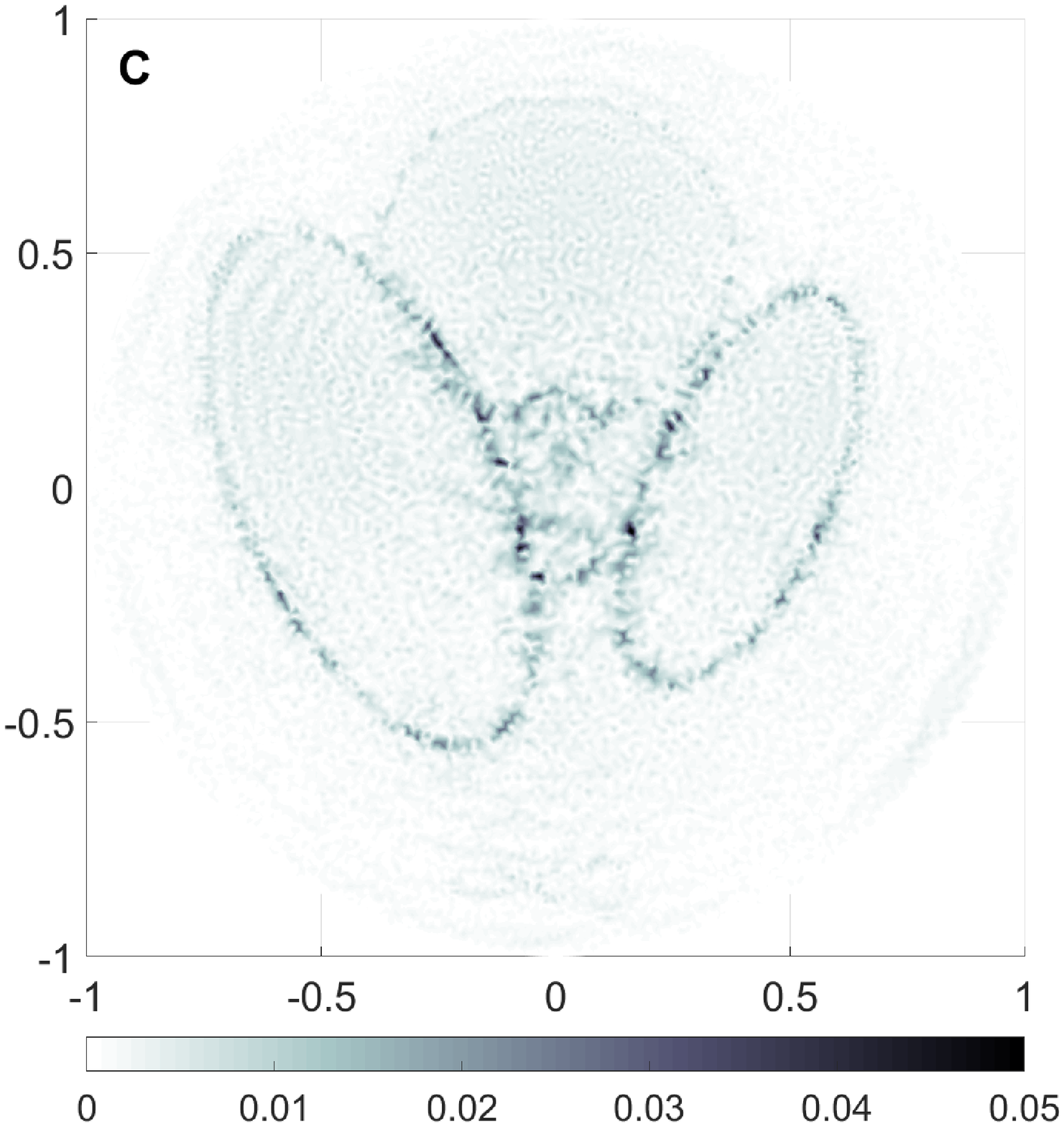}
  \includegraphics[width=0.48\textwidth]{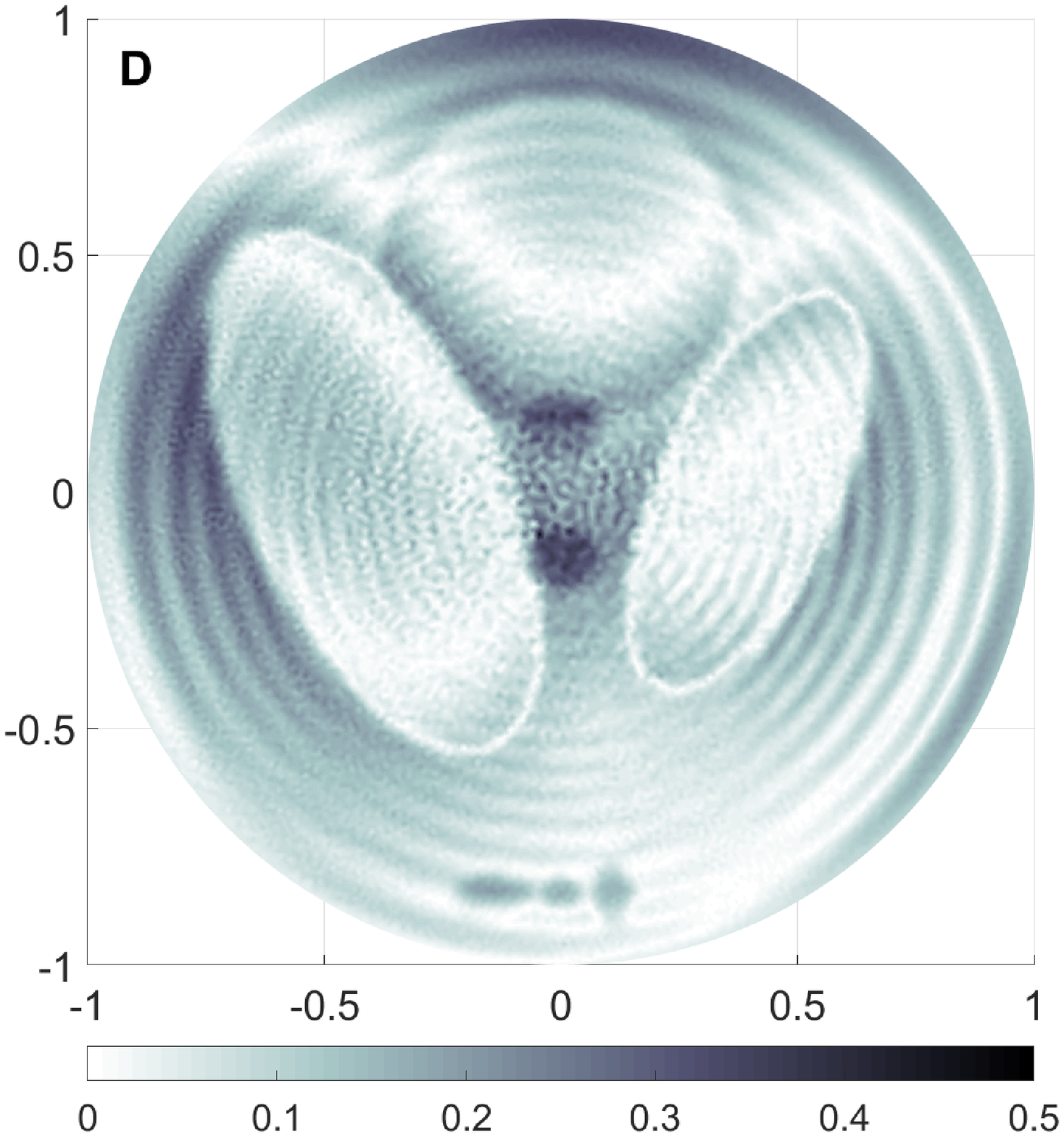}
  \vspace{-1em}
  \end{subfigure}
  \caption{Panel \textbf{A}: Reconstruction accounting for the measured data of the Fabry--Perot sensor as modeled by (\ref{eq:eff_measurement_simple}). Panel \textbf{B}: Reconstruction obtained using the overly idealized (but commonly assumed) model of measured data (\ref{eq:overly_ideal_measurement}). Panel \textbf{C}: Error profile for the reconstruction shown in panel \textbf{A}. Panel \textbf{D}: Error profile for the reconstruction shown in panel \textbf{B}.}
  \label{fig:Recon}
\end{figure}

%%%%%%%%%%%%%%%%%%%%%%%%%%%%%%%%%%%%%%%%%%%%%%%%%%%%%%%
%%%%%%%%%%%%%%%%   NEW SECTION   %%%%%%%%%%%%%%%%%%%%%%
%%%%%%%%%%%%%%%%%%%%%%%%%%%%%%%%%%%%%%%%%%%%%%%%%%%%%%%
\section{Conclusions}
\label{sec:conclusions}

We have developed a model for the type of measurements acquired by sensors based on the Fabry--Perot design. This model takes the form shown in (\ref{eq:eff_measurement_simple}). The validity of this expression is limited to small values for the thickness $h$ of the sensing film with respect to the wavelength of the pressure fields. This means that $h \ll \Csen/f$ where $f$ is the oscillatory frequency. For instance, \cite{Cox2007c} considered a Fabry--Perot polymer film of thickness $h = 40$ $\mu$m with compressional wave speed $\Csen = 2200$ m/s and a frequency range $1-15$ MHz. At the higher end of this range, the film thickness is about one fourth of the wavelength. Therefore, the proposed model would be valid for most of this frequency range.

Our mathematical model of the measurements captures the directional response observed experimentally \cite{Cox2007c,Guggenheim2017,Sheaff2014,Yoo2015,Zhang2008}. 
For instance, notice that for a pressure wave $p$ impinging the boundary $\Gamma$ in the normal direction, the sensor design fully captures the pressure field. However, for pressure waves traveling at other incidence angles, the sensor response may exhibit non-ideal behavior, such as vanishing response at critical angles, as shown in \Cref{fig:Response}. This is the mathematical description of the directivity associated with these ultrasound sensors. The incorporation of these features into reconstruction algorithms has been recognized as one of the challenges associated with improving photoacoustic inversion \cite{Cox2009a,Ellwood2014,Wang2011a,Xia2014}.

Using the model (\ref{eq:eff_measurement_simple}) for the measurements, we studied the solvability of the PAT problem and concluded that the problem is well-posed in the appropriate spaces and norms. See the precise statements in \cref{thm:main}. Following the analysis, a reconstruction algorithm was implemented based on the Landweber iteration. We carried out proof-of-concept numerical simulations to illustrate the reconstructions obtained from this method for synthetic data after discretization using FEM. For the chosen Shepp--Logan phantom, \Cref{fig:Recon} displays the results obtained by incorporating (Panel \textbf{A}) and by ignoring (Panel \textbf{B}) the model for the Fabry--Perot measurements. The respective error profiles are shown in Panels \textbf{C} and \textbf{D} of the same figure. Approximately, a $22\%$ relative error is added when the proposed model for the Fabry--Perot measurement is not incorporated in the reconstruction algorithm. We also highlight the qualitative difference between the error profiles from Panels \textbf{C} and \textbf{D} of \Cref{fig:Recon}. By ignoring the Fabry--Perot model, the error profile exhibits prominent artifacts over the entire image, especially near the detection boundary. These artifacts may be explained by the directivity response shown in \Cref{fig:Response}. By design, the proposed reconstruction algorithm accounts for the directivity response of these sensors leading to the removal of those artifacts.

Finally, we propose a couple of directions for future research that may improve or extend this work. It remains to study the well-posedness of the PAT problem for a Fabry--Perot measurement model that includes both the p-waves and s-waves in the elastic sensing film and backing substrate of the sensor. Such a model would incorporate the influence of the shear waves on the measurements as studied by Cox and Beard for plane waves \cite{Cox2007c}. Also, non-trivial directivity responses are not only induced by the Fabry--Perot sensor design, but also by piezoelectric detectors \cite{Beard2000b,Nuster2009,Nuster2019,Paltauf2017,Wilkens2007}. Therefore, analysis of the well-posedness for the PAT problem using piezoelectric measurements is also needed.

%%%%%%%%%%%%%%%%%%%%%%%%%%%%%%%%%%%%%%%%%%%%%%%%%%%%%%%
%%%%%%%%%%%%%%%%   NEW SECTION   %%%%%%%%%%%%%%%%%%%%%%
%%%%%%%%%%%%%%%%%%%%%%%%%%%%%%%%%%%%%%%%%%%%%%%%%%%%%%%
\section*{Acknowledgments}
The author would like to thank Texas Children's Hospital for its support and for the research-oriented environment provided by the Predictive Analytics Lab. 

%The author would like to thank the anonymous referees for carefully reviewing the manuscript and for
%providing helpful recommendations.

\bibliographystyle{siamplain}
\bibliography{InvProblemBiblio}

\end{document}